\newcommand{\pushright}[1]{\ifmeasuring@#1\else\omit$\displaystyle#1$\ignorespaces\fi}
\def\moverlay{\mathpalette\mov@rlay}
\def\mov@rlay#1#2{\leavevmode\vtop{%
   \baselineskip\z@skip \lineskiplimit-\maxdimen
   \ialign{\hfil$\m@th#1##$\hfil\cr#2\crcr}}}
\newcommand{\charfusion}[3][\mathord]{
    #1{\ifx#1\mathop\vphantom{#2}\fi
        \mathpalette\mov@rlay{#2\cr#3}
      }
    \ifx#1\mathop\expandafter\displaylimits\fi}
\definecolor{indigo}{RGB}{75, 0, 130}
\definecolor{midnightblue}{RGB}{25, 25, 112}
 \let\T@ref@orig\T@ref
 \def\T@ref#1{\T@ref@orig{#1}\wrtusdrf{#1}}%
 \let\@refstar@orig\@refstar
 \def\@refstar#1{\@refstar@orig{#1}\wrtusdrf{#1}}%
 \DeclareRobustCommand\ref{\@ifstar\@refstar\T@ref}%
\pgfplotsset{compat=1.6}
\pgfplotsset{soldot/.style={color=blue,only marks,mark=*}} \pgfplotsset{holdot/.style={color=blue,fill=white,only marks,mark=*}}
\def\XXint#1#2#3{{\setbox0=\hbox{$#1{#2#3}{\int}$ }
\vcenter{\hbox{$#2#3$ }}\kern-.6\wd0}}
\tikzstyle arrowstyle=[scale=1]
\tikzstyle directed=[postaction={decorate,decoration={markings,
    mark=at position .65 with {\arrow[arrowstyle]{stealth}}}}]
\tikzstyle reverse directed=[postaction={decorate,decoration={markings,
    mark=at position .65 with {\arrowreversed[arrowstyle]{stealth};}}}]
\DeclareSymbolFont{extraup}{U}{zavm}{m}{n}
\DeclareMathSymbol{\varheart}{\mathalpha}{extraup}{86}
\DeclareMathSymbol{\vardiamond}{\mathalpha}{extraup}{87}
\numberwithin{equation}{section}
\newtheorem{lem}{Lemma}
\newtheorem{theo}{Theorem}
\numberwithin{theo}{section}
\numberwithin{lem}{section}
\numberwithin{prop}{section}
\numberwithin{corlem}{section}
\numberwithin{cortheo}{section}
\numberwithin{corprop}{section}
\theoremstyle{definition}
\newtheorem{conjecture}{Conjecture}
\newtheorem*{remark}{Remark}
\numberwithin{definition}{section}
\numberwithin{exmp}{section}
\numberwithin{exer}{section}
\numberwithin{conjecture}{section}
\newcommand*{\defeq}{\mathrel{\rlap{%
                     \raisebox{0.24ex}{$\m@th\cdot$}}%
                     \raisebox{-0.24ex}{$\m@th\cdot$}}%
                     =}
\newcommand{\lt}{\ensuremath <}
\newcommand{\gt}{\ensuremath >}
\renewcommand{\le}{\leqslant}
\renewcommand{\ge}{\geqslant}
\newcommand{\ve}{\varepsilon}
\newcommand{\Ga}{\Gamma}
\newcommand{\al}{\alpha}
\newcommand{\be}{\beta}
\newcommand{\de}{\delta}
\newcommand{\ze}{\zeta}
\newcommand{\f}{\frac}
\newcommand{\mf}{\mfrac}
\newcommand{\tf}{\tfrac}
\newcommand{\cs}{\mathscr}
\newcommand{\bb}{\mathbb}
\newcommand{\dt}{\dif t}
\renewcommand{\mod}[1]{\ (\mathrm{mod}\ #1)}
\DeclarePairedDelimiterX{\inn}[2]{\langle}{\rangle}{#1, #2}
\DeclareMathOperator{\me}{e}
\DeclareMathOperator{\sgn}{sgn}
\DeclareSymbolFont{eulargesymbols}{U}{zeuex}{m}{n}
\DeclareMathSymbol{\intop}{\mathop}{eulargesymbols}{"52}
\title[The first negative Fourier coefficient]{The first negative Fourier coefficient of an Eisenstein series newform}
\author{Sebastián Carrillo Santana}
\address{Mathematics Institute, Utrecht University,  Hans Freudenthalgebouw, Budapestlaan 6, 3584 CD Utrecht, Netherlands}
\email{s.carrillosantana@uu.nl}
\begin{document}
\begin{abstract}
There have been a number of papers on statistical questions concerning the sign changes of Fourier coefficients of newforms. In one such paper, Linowitz and Thompson gave a conjecture describing when, on average, the first negative sign of the Fourier coefficients of an Eisenstein series newform occurs. In this paper, we correct their conjecture and prove the corrected version.
\end{abstract}

\maketitle


\section{Introduction}
For a Dirichlet character $\chi$ and a positive integer $N$, we will denote by $ M_k(N,\chi)$ the vector space of modular forms on $\Ga_0(N)$ of weight $k$, level $N$ and character $\chi$. Let $E_k(N,\chi)$ be the subspace of Eisenstein series and $S_k(N,\chi)$ the subspace of cusp forms. For a prime $p$, we let $T_p$ be the $p$th Hecke operator.

Let $H_k^{*}(N)$ be the subspace of $S_k(\chi_0,N)$ of newforms with trivial character $\chi_0$. Given a newform $f\in H_k^{*}(N)$, let $\lambda_f(p)$ be the eigenvalue of $f$ with respect to the Hecke operator $T_p$. The restriction to the trivial character ensures that the sequence $\{\lambda_f(p)\}$ is real. Many authors have studied the sequence of signs of the Hecke eigenvalues of $f$. For example, one could pose questions such as:
\begin{enumerate}[{\rm (i)}]
  \item Are there infinitely many primes $p$ such that $\lambda_f(p)\gt 0$ (or $\lambda_f(p)\lt 0$)?
  \item What is the first change of sign? More specifically, what is the smallest $n\ge 1$ (or prime $p$) such that $\lambda_f(n)\lt 0$ (or $\lambda_f(p)\lt 0$)? This is an analogue of the least quadratic non-residue problem.

  \item Given an arbitrary sequence of signs $\ve_p\in\{\pm 1\}$, what is the number of newforms $f$ (in some family) such that $\sgn\lambda_f(p)=\ve_p$ for all $p\le x$?
\end{enumerate}
In the cusp form setting, questions (i) and (ii) are answered in \cite{Kohnen}, \cite{Kowalski}, and \cite{Matomaki}. In this paper, we focus on (iii). Kowalski, Lau, Soundararajan and Wu \cite{Kowalski} obtained a lower bound for the proportion of newforms $f\in H_k^{*}(N)$ whose sequence of eigenvalues $\lambda_f(p)$ has signs coinciding with a prescribed sequence $\{\ve_p\}$:
\begin{theo}[Kowalski, Lau, Soundararajan, Wu, 2010]
  Let $N$ be a squarefree number, $k\ge 2$ an even integer, and $\{\ve_p\}$ a sequence of signs. Then, for any $0\lt \ve \lt \mf12$, there exists some $c\gt 0$ such that
  \[
  \frac{1}{|H_k^{*}(N)|}\# \{f\in H_k^{*}(N) \; : \; \sgn\lambda_f(p)=\ve_p \mbox{ for } p\le z,\; p\nmid N\} \ge \Big(\f12 - \ve\Big)^{\pi(z)}
  \]
  for $z=c\sqrt{\log{kN}\log\log{kN}}$ provided $kN$ is large enough. Here $\pi(z)$ is the number of primes less than or equal to $z$.
\end{theo}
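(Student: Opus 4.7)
\emph{Strategy.} The natural tool is the Petersson trace formula applied to $H_k^*(N)$, whose consequence via vertical Sato--Tate equidistribution is that the normalized eigenvalues $2\cos\theta_f(p):=\lambda_f(p)/p^{(k-1)/2}$ with $f$ ranging over $H_k^*(N)$ equidistribute (as $kN\to\infty$) with respect to the Sato--Tate measure $d\mu_{ST}(\theta)=(2/\pi)\sin^2\theta\,d\theta$ on $[0,\pi]$. Since $\mu_{ST}[0,\pi/2]=\mu_{ST}[\pi/2,\pi]=\f12$, the heuristic probability that a random newform matches a prescribed sign pattern $(\ve_p)_{p\le z}$ is $(1/2)^{\pi(z)}$, and the theorem asserts that this heuristic persists as a lower bound $(\f12-\ve)^{\pi(z)}$ whenever $z\le c\sqrt{\log kN\log\log kN}$.

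\emph{Polynomial detection of signs.} The indicator $\1_{x>0}$ is not a polynomial, so for each $\delta>0$ one constructs a non-negative polynomial minorant $P_\delta(x)\le\1_{x>0}$ on $[-2,2]$ of bounded degree $d=d(\delta)$ with $\int P_\delta(2\cos\theta)\,d\mu_{ST}(\theta)\ge\f12-\delta$, for instance via a Beurling--Selberg-type smoothed construction in the Sato--Tate metric. Expanding $P_\delta$ in the Chebyshev polynomials of the second kind $U_n$ and using the Hecke relation $U_n(\cos\theta_f(p))=\lambda_f(p^n)/p^{n(k-1)/2}$, the product
\[
F(f)\defeq\prod_{\substack{p\le z\\p\nmid N}}P_\delta\bigl(\ve_p\lambda_f(p)/p^{(k-1)/2}\bigr)
\]
is a non-negative minorant of the event $\{\sgn\lambda_f(p)=\ve_p \text{ for all } p\le z,\, p\nmid N\}$, and expands as a linear combination (with coefficients uniformly bounded in $p$) of normalized Hecke values $\lambda_f(m)/m^{(k-1)/2}$ for $m=\prod p^{n_p}$ with $n_p\le d$.

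\emph{Trace formula and parameter balance.} Applying Petersson's formula termwise, after the standard harmonic-to-uniform weighting correction available for squarefree $N$, gives
\[
\f{1}{|H_k^*(N)|}\sum_{f\in H_k^*(N)}F(f) = \prod_{\substack{p\le z\\p\nmid N}}\int P_\delta(\ve_p\cdot 2\cos\theta)\,d\mu_{ST}(\theta) + E,
\]
where the main term is $\ge(\f12-\delta)^{\pi(z)}$ and $E$ is a sum of Kloosterman sums over moduli divisible by $N$ and levels $m\le M\defeq z^{d\pi(z)}$. Choosing $\delta$ small in terms of $\ve$ and invoking the Weil bound, $E$ is negligible provided $M=(kN)^{o(1)}$, and it is absorbed into a further multiplicative loss of $(1-O(\delta))^{\pi(z)}$ uniformly in the sign pattern $(\ve_p)$.

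\emph{Main obstacle.} The entire argument hinges on reconciling two competing demands: the minorant $P_\delta$ needs degree $d=d(\delta)\to\infty$ as $\delta\to 0$ for a sharp approximation at each prime, yet raising $d$ inflates the total ``degree'' $d\pi(z)$ of $F$, which directly inflates $M$ and hence the Kloosterman error. The threshold $z\le c\sqrt{\log kN\log\log kN}$ is precisely the one for which $d\pi(z)\log z = o(\log kN)$, so that $M=(kN)^{o(1)}$ and the two losses can be balanced; pushing $z$ beyond this barrier would require substantially finer joint-equidistribution input than Petersson plus Weil provides.
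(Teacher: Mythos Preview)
The paper does not prove this theorem at all: it is quoted as a result of Kowalski, Lau, Soundararajan and Wu and simply cited to the reference~\cite{Kowalski} as background motivation for the Eisenstein-series analogue that is the paper's actual subject. So there is no ``paper's own proof'' to compare against, and for the purposes of this article no proof is expected of you.

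That said, your sketch is a faithful outline of the argument in the original KLSW paper: polynomial minorants for the sign indicator expanded in Chebyshev/Hecke polynomials, Petersson trace formula to produce a Sato--Tate main term plus Kloosterman error, Weil bound to control the latter, and the balance $d\,\pi(z)\log z=o(\log kN)$ forcing the threshold $z\asymp\sqrt{\log kN\log\log kN}$. The only point that would need care in a full write-up is the passage from harmonic to natural weights for squarefree $N$ (you allude to it but do not carry it out), and the precise construction of the non-negative minorant $P_\delta$ with controlled degree; both are handled in \cite{Kowalski}. For the present paper, however, a citation suffices.
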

Now, let $\chi_1$, $\chi_2$ be Dirichlet characters modulo $N_1, N_2$ and for an integer $k\gt 2$, define the following variant of the sum of divisors function:
\begin{equation}\label{eq: sigma(n)}
  \sigma_{\chi_1,\chi_2}^{k-1}(n)=\sum_{d|n} \chi_1\big(\f{n}{d}\big)\chi_2(d)d^{k-1}.
\end{equation}
Now assume that $\chi_1$ and $\chi_2$ are not simultaneously principal $\mod{1}$. It is well known (see, for example, \cite{Diamond}) that  if $\chi_1\chi_2(-1)=(-1)^k$, then the function
\[
E_k(\chi_1,\chi_2,z)\defeq \frac{\de(\chi_1)}{2}L(1-k,\chi_2) + \sum_{n\ge 1}\sigma_{\chi_1,\chi_2}^{k-1}(n)q^n,
\]
is an Eisenstein series of weight $k$, level $N_1N_2$ and character $\chi_1\chi_2$. Here $q=\me^{2\pi i z}$ and
\[
\de(\chi_1)=\begin{cases}
              1, & \mbox{if } \chi_1 \mbox{ is principal} \\
              0, & \mbox{otherwise}.
            \end{cases}
\]
In 1977, Weisinger \cite{Weisinger} developed a newform theory for $E_k(N,\chi)$ analogous to the one developed by Atkin and Lehner \cite{Atkin} for cusp forms. In this theory, we have:
\begin{itemize}
  \item The newforms of $E_k(N,\chi)$ are functions of the form $E_k(\chi_1,\chi_2,z)$ for which $N=N_1N_2$, $\chi=\chi_1\chi_2$, and $\chi_1,\chi_2$ are primitive.
  \item The eigenvalue of $E_k(\chi_1,\chi_2,z)$ with respect to the Hecke operator $T_p$ is $\sigma_{\chi_1,\chi_2}^{k-1}(p)$. In other words, the eigenvalues of this type of Eisenstein series coincide with its Fourier coefficients.
\end{itemize}
By exploiting the analytical properties of $\sigma_{\chi_1,\chi_2}^{k-1}(n)$, Linowitz and Thompson \cite{Lola} answered the three questions mentioned at the beginning of this article for Eisenstein series newforms.

Note that by \eqref{eq: sigma(n)}, $\sigma_{\chi_1,\chi_2}^{k-1}(n)\in\bb{R}$ when $\chi_1,\chi_2$ are real characters. Since we want $E_k(\chi_1,\chi_2,z)$ to be an Eisenstein series, we exclude the case when $\chi_1$ and $\chi_2$ are principal. We call these types of characters \emph{quadratic} because for every fundamental discriminant $D$, i.e., for each discriminant arising from a quadratic number field, we can associate a real character defined by $\chi_D(m)=\big(\frac{D}{m}\big)$. Therefore, counting Eisenstein series newforms of level $N\le x$ is equivalent to counting fundamental discriminants $D_1, D_2$ with $|D_1D_2|\le x$. Let
\[
\cs{D}\defeq \{(D_1, D_2) \; : \; |D_1D_2|\le x\}.
\]
Taking all of these facts into consideration, Linowitz and Thompson \cite{Lola} showed:
\begin{theo}[Linowitz, Thompson, 2015]\label{th: LolaMainTheorem}
  Let $\{p_1,\ldots, p_k\}$ be a sequence of primes and $\{\ve_{p_1},\ldots, \ve_{p_k}\}\in\{-1,0,1\}$ a sequence of signs. Then,
  \begin{align*}
  \frac{1}{|\cs{D}|}\#\{(D_1,D_2)\in\cs{D} \; :\; &{}\sgn\sigma_{\chi_1,\chi_2}^{k-1}(p_i)=\ve_{p_i},\; 1\le i \le k\} \\ &{}\xrightarrow[x\to \infty]{}\prod_{\substack{\ve_{p_i}=0 \\ 1\le i \le k}}\frac{1}{(p_i+1)^2}\prod_{\substack{\ve_{p_i}\neq 0 \\ 1\le i \le k}}\frac{p_i(p_i+2)}{2(p_i+1)^2}\cdot
  \end{align*}
\end{theo}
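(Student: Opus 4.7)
The plan is to reduce the sign of $\sigma_{\chi_1,\chi_2}^{k-1}(p)$ to the pair $(\chi_1(p),\chi_2(p))$, to compute the local density of each such pair among fundamental discriminants, and to multiply these densities using asymptotic independence across $D_1,D_2$ and across the primes $p_1,\dots,p_k$.

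\emph{Sign rule.} By \eqref{eq: sigma(n)}, $\sigma_{\chi_1,\chi_2}^{k-1}(p)=\chi_1(p)+\chi_2(p)\,p^{k-1}$. Since $\chi_1,\chi_2$ are quadratic, $\chi_i(p)\in\{-1,0,1\}$, and because $p^{k-1}\ge 2>|\chi_1(p)|$ the sign of the sum equals $\chi_2(p)$ whenever $\chi_2(p)\ne 0$ and equals $\chi_1(p)$ otherwise. In particular the value is zero iff $p\mid D_1$ and $p\mid D_2$.

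\emph{Local densities.} Using that a fundamental discriminant is either odd squarefree with $D\equiv 1\pmod 4$, or $D=4m$ with $m$ squarefree and $m\equiv 2,3\pmod 4$, an elementary count in residue classes (mod $p$ for odd $p$, mod $8$ for $p=2$) yields
\[
P\bigl(\chi_D(p)=0\bigr)=\tfrac{1}{p+1},\qquad P\bigl(\chi_D(p)=\pm 1\bigr)=\tfrac{p}{2(p+1)}
\]
as the asymptotic proportion among fundamental discriminants with $|D|\le Y$.

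\emph{Joint density and conclusion.} Set $M=8p_1\cdots p_k$. Every event $\chi_j(p_i)=c_{ij}$ depends only on $D_j\bmod M$, so the asymptotic count of $(D_1,D_2)\in\cs D$ in a fixed congruence pattern follows by writing it as a hyperbolic sum over $D_1$ in its residue class and invoking an effective count of fundamental discriminants in arithmetic progressions mod $M$. Normalising by $|\cs D|\sim Cx\log x$, the joint density factors as a product over primes and over $(D_1,D_2)$. Together with the sign rule, $\ve_{p_i}=0$ contributes the single pair $(0,0)$ with density $\frac{1}{(p_i+1)^2}$, while $\ve_{p_i}\ne 0$ is realised by two disjoint pairs with total density
\[
\tfrac{p_i}{2(p_i+1)}+\tfrac{1}{p_i+1}\cdot\tfrac{p_i}{2(p_i+1)}=\tfrac{p_i(p_i+2)}{2(p_i+1)^2},
\]
yielding the stated product. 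The sign rule and single-prime densities are bookkeeping; the delicate step is verifying the joint density, i.e.\ showing that the hyperbolic constraint $|D_1D_2|\le x$ respects asymptotic independence of the residues of $D_1,D_2$ mod $M$ uniformly in $(a_1,a_2)$. This amounts to a smoothed dyadic decomposition plus a power-saving count of fundamental discriminants in arithmetic progressions mod the fixed modulus $M$.
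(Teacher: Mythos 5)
First, a point of comparison: this theorem is quoted from Linowitz--Thompson \cite{Lola}, and the present paper contains no proof of it, so there is no in-house argument to measure you against; I can only judge your outline on its own terms and against the ingredients the paper does record. Your combinatorial skeleton is correct and matches those ingredients exactly: your sign rule is the paper's \eqref{eq: sgn(sigma)}, your single-prime densities are the lemma quoted in Section 2 (i.e.\ \cite[Lemma 4.2]{Lola}), and your final arithmetic
\[
\frac{p_i}{2(p_i+1)}+\frac{1}{p_i+1}\cdot\frac{p_i}{2(p_i+1)}=\frac{p_i(p_i+2)}{2(p_i+1)^2}
\]
for $\ve_{p_i}\neq 0$, together with $\frac{1}{(p_i+1)^2}$ for $\ve_{p_i}=0$, reproduces the stated limit. (One small caveat you share with the paper's own \eqref{eq: sgn(sigma)}: the domination $p^{k-1}>1$ requires the weight to be at least $2$; for weight $1$ the sum $\chi_1(p)+\chi_2(p)$ can vanish with neither character vanishing.)

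The one step you do not carry out is the one you correctly single out as the only nontrivial one: that the residues of $D_1$ and $D_2$ modulo $M=8p_1\cdots p_k$ equidistribute independently, with the product of local densities, over the hyperbolic region $|D_1D_2|\le x$, and that the local densities themselves factor over the primes dividing $M$ (a squarefree-sieve count, since ``fundamental discriminant'' is not a congruence condition). As written this is an assertion, not a proof. It is, however, exactly the mechanism the present paper deploys in Section 3: fix $D_1$, count the $D_2$ with $|D_2|\le x/|D_1|$ satisfying a local condition via a linear asymptotic, then sum $1/|D_1|$ by partial summation to produce the $x\log x/\ze(2)^2$ normalisation; the terms with $|D_1|$ near $x$ contribute only $O(x)=o(x\log x)$. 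Since $M$ is fixed and the count of fundamental discriminants in a fixed residue class modulo $M$ has a power-saving error term, the uniformity you need is unproblematic. I therefore regard your outline as correct in structure and in all the explicit computations, with the joint-equidistribution lemma left as standard but genuinely unwritten work rather than ``bookkeeping.''
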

Now, let $\eta(D_1,D_2)$ represent the smallest prime $p$ such that $\sgn(\sigma_{\chi_1,\chi_2}^{k-1}(p))=-1$. Linowitz and Thompson \cite{Lola} then conjectured:
\begin{conjecture}\label{con: ConLola}
We have
\[
\frac{\sum_{|D_1D_2|\le x}\eta(D_1,D_2)}{\sum_{|D_1D_2|\le x} 1} \xrightarrow[x\to \infty]{} \theta,
\]
where
\begin{equation}\label{eq: theta}
\theta\defeq \sum_{k=1}^{\infty}\frac{p_k^2(p_k+2)}{2(p_k+1)^2}\prod_{j=1}^{k-1}\frac{2+p_j(p_j+2)}{2(p_j+1)^2}\approx 3.9750223902\ldots
\end{equation}
\end{conjecture}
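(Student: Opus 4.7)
The plan is to decompose the numerator of the ratio as
\[
\sum_{(D_1, D_2) \in \cs{D}} \eta(D_1, D_2) \;=\; \sum_p p \cdot N_p(x), \qquad N_p(x) \defeq \#\{(D_1, D_2) \in \cs{D} : \eta(D_1, D_2) = p\},
\]
and to pass to the limit $x \to \infty$ term by term in $p$. A joint extension of Theorem \ref{th: LolaMainTheorem} -- asserting that the signs $\sgn\sigma^{k-1}_{\chi_1,\chi_2}(q)$ at distinct primes $q$ behave asymptotically independently with the marginal probabilities given there -- combined with inclusion--exclusion on the event $\{\eta = p\}$ directly yields
\[
\frac{N_p(x)}{|\cs{D}|} \xrightarrow[x \to \infty]{} \frac{p(p+2)}{2(p+1)^2}\prod_{q < p}\frac{2 + q(q+2)}{2(q+1)^2} \;=\; P(\eta = p)
\]
for each fixed $p$, with $\sum_p p \cdot P(\eta = p) = \theta$. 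The task is therefore to (i) establish this joint extension, and (ii) justify the interchange of $\lim_{x\to\infty}$ with the sum over $p$, which requires a tail estimate on $\sum_{p > Y} p N_p(x)/|\cs D|$ uniform in $x$.

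For (i), the sign $\sgn\sigma^{k-1}_{\chi_1,\chi_2}(q)$ is governed by a small residue class: for $k \ge 2$ and $q \nmid D_1D_2$, the identity $\sigma^{k-1}_{\chi_1,\chi_2}(q) = \chi_1(q) + \chi_2(q) q^{k-1}$ together with $q^{k-1} > 1 \ge |\chi_1(q)|$ forces $\sgn\sigma^{k-1}_{\chi_1,\chi_2}(q) = \chi_2(q) = \big(\tfrac{D_2}{q}\big)$, and analogous identifications hold at $q = 2$ (via residues modulo $8$) and at primes $q \mid D_1D_2$. The joint sign pattern at primes $\le Y$ is thus determined by $(D_1, D_2)$ modulo $8\prod_{2 < q \le Y} q$, and counting fundamental discriminants in such congruence classes -- as in Linowitz and Thompson's proof of Theorem \ref{th: LolaMainTheorem} -- produces the product of local densities together with a power-saving error from the P\'olya--Vinogradov (or Burgess) bound on the relevant Kronecker character sums.

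For (ii), the main obstacle, one would like to show that the first moment of $\eta$ is uniformly bounded in $x$, namely $\sum_{(D_1,D_2)\in \cs D}\eta(D_1, D_2) \ll |\cs D|$. Since $\eta(D_1, D_2)$ equals, up to a negligible correction from primes dividing $D_1D_2$, the least prime $p$ with $\big(\tfrac{D_2}{p}\big) = -1$, this reduces to averaging the least quadratic non-residue of $|D_2|$ over fundamental discriminants -- a setting in which the classical arguments of Erd\H{o}s and Elliott apply. Concretely, the joint equidistribution from (i) should deliver the level-set bound $M_Y(x) \defeq \#\{(D_1, D_2) \in \cs D : \eta(D_1, D_2) > Y\} \ll |\cs D|\cdot \prod_{q \le Y}\alpha_q \ll |\cs D|\cdot 2^{-\pi(Y)}$ uniformly in $x$, and summation by parts on $\sum_{p > Y} p(M_{p-1}(x) - M_p(x))$ then yields the required $\sum_{p > Y} pN_p(x)/|\cs D| \to 0$ as $Y \to \infty$.

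Putting (i) and (ii) together via dominated convergence identifies $\lim_{x \to \infty}|\cs D|^{-1}\sum\eta(D_1, D_2)$ with $\sum_p p\cdot P(\eta = p) = \theta$, as desired. The hardest technical step is securing the level-set bound in (ii) uniformly in both $Y$ and $x$: making $M_Y(x) \ll |\cs D|\cdot 2^{-\pi(Y)}$ hold simultaneously for all $Y$ and $x$ forces effective equidistribution of pairs of fundamental discriminants in residue classes modulo $\prod_{q \le Y}q$, which as $Y$ grows with $x$ demands careful character-sum estimates beyond the scope of Theorem \ref{th: LolaMainTheorem} itself.
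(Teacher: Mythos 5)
The statement you are trying to prove is the \emph{un}corrected Linowitz--Thompson conjecture, and the whole point of this paper is that it is false: Theorem \ref{th: Main Theorem} shows that the limit equals $\Theta\cdot(1-\beta)+\alpha\approx 4.6326$, not $\theta\approx 3.9750$. So no proof of Conjecture \ref{con: ConLola} can be correct, and the gap in yours is exactly where you flagged ``the hardest technical step,'' namely part (ii). Your part (i) is essentially Theorem \ref{th: LolaMainTheorem} and is fine for any \emph{fixed} finite set of primes; it does give $N_p(x)/|\cs{D}|\to P(\eta=p)$ pointwise. But your dominated-convergence step requires $\lim_{Y\to\infty}\sup_x \sum_{p>Y}pN_p(x)/|\cs{D}|=0$, and the paper's closing Remark shows this quantity is the constant $\mu=\Theta(1-\beta)+\alpha-\theta\approx 0.6575>0$. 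Consequently the uniform level-set bound $M_Y(x)\ll |\cs{D}|\cdot 2^{-\pi(Y)}$ that you invoke cannot hold in the range needed: equidistribution of pairs modulo $\prod_{q\le Y}q$ is only available (via P\'olya--Vinogradov/Burgess, as in Pollack's Theorem \ref{th: Pollack}) for $Y$ up to a small power of $\log x$, and beyond that cutoff the independent-pair model genuinely misrepresents the tail. The primes past the cutoff do ``make an impact on the average,'' contrary to what Linowitz and Thompson (and you) assume.

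The paper's route is structurally different and is what supplies the missing tail control. Using \eqref{eq: sgn(sigma)} one gets the deterministic identity $\eta(D_1,D_2)=n(D_2)$ whenever $\eta\nmid D_2$ (and $\eta=n(D_1)$ otherwise), which converts the two-variable problem into averages of the single-discriminant statistic $n(D)$ over the hyperbolic region $|D_1D_2|\le x$; these are then evaluated by partial summation from Pollack's theorem, whose part (i) is uniform for $p_k\le(\log x)^{1/3}$ and whose part (ii) is a genuine tail estimate. The single-discriminant survival probabilities $\prod_{q\le Y}\tfrac{q+2}{2(q+1)}\asymp 2^{-\pi(Y)}\log Y$ carry an extra factor of $\log Y$ compared with the pair-model prediction $\prod_{q\le Y}\tfrac{q^2+2q+2}{2(q+1)^2}\asymp 2^{-\pi(Y)}$, and it is precisely this heavier tail that pushes the true answer up from $\theta$ to $\Theta(1-\beta)+\alpha$. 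If you want a provable statement along your lines, you should aim at the corrected constant of Theorem \ref{th: Main Theorem}, not at $\theta$.
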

They gave a heuristic argument as evidence towards their conjecture, showing:
\begin{align*}
  \frac{\sum_{|D_1D_2|\le x}\eta(D_1,D_2)}{\sum_{|D_1D_2|\le x} 1}  \xrightarrow[x\to \infty]{}{}&\sum_{k=1}^{\infty}p_k\, \mbox{Prob}(\eta(D_1,D_2)=p_k) \\
  ={} & \sum_{k=1}^{\infty}p_k\, \mbox{Prob}(\ve_{p_k}=-1)\prod_{i=1}^{k-1}\mbox{Prob}(\ve_{p_i}=0\mbox{ or }1) \\
  ={}& \sum_{k=1}^{\infty}\frac{p_k^2(p_k+2)}{2(p_k+1)^2}\prod_{i=1}^{k-1}\bigg(\frac{1}{(p_i+1)^2}+\frac{p_i(p_i+2)}{2(p_i+1)^2}\bigg),
\end{align*}
where the last equality follows from Theorem \ref{th: LolaMainTheorem}. The problem with this argument is that Theorem \ref{th: LolaMainTheorem} fixes a set of primes and then lets $x\to\infty$. In this argument we need to allow the primes to tend to infinity with $x$. The authors stated: ``[W]e have a good understanding of the effect of the small primes, but one would need to argue that the primes after some cutoff point do not make much of an impact on the average. Presumably, this would require using the large sieve''.

The goal of the present article is to correct their conjecture by proving the following result:
  \begin{theo}\label{th: Main Theorem}
  We have
  \[
\frac{\sum_{|D_1D_2|\le x}\eta(D_1,D_2)}{\sum_{|D_1D_2|\le x} 1} \xrightarrow[x\to \infty]{} {}\Theta\cdot(1-\be)+\al,
\]
where
\[
\Theta =\sum_{k=1}^{\infty}\frac{p_k^2}{2(p_k+1)}\prod_{j=1}^{k-1}\frac{p_j+2}{2(p_j+1)},
\]

\[
  \al= \sum_{k=1}^{\infty}\frac{p_k^2}{2(p_k+1)^2}\prod_{j=1}^{k-1}\frac{p_j+2}{2(p_j+1)},
\]
and
\[
  \be= \sum_{k=1}^{\infty}\frac{p_k}{2(p_k+1)^2}\prod_{j=1}^{k-1}\frac{p_j+2}{2(p_j+1)}\cdot
\]
Numerically,
\[
\Theta\cdot(1-\be)+\al\approx 4.63255603509332\ldots
\]
\end{theo}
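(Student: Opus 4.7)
The plan is to reduce the average to expectations, over random fundamental discriminants, of quantities built from $F_2(D_2)\defeq\min\{p\text{ prime}:\chi_2(p)=-1\}$. Evaluating \eqref{eq: sigma(n)} at $n=p$ gives $\sigma^{k-1}_{\chi_1,\chi_2}(p)=\chi_2(p)p^{k-1}+\chi_1(p)$, which (for $k\ge 3$, the case relevant since $k\ne 2$) is negative iff $\chi_2(p)=-1$, or else $\chi_2(p)=0$ and $\chi_1(p)=-1$. Call a prime $p$ \emph{bad for} $(D_1,D_2)$ if $p\mid D_2$, $p<F_2(D_2)$, and $\chi_1(p)=-1$; let $p^*$ be the smallest such prime ($+\infty$ if none). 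Then $\eta(D_1,D_2)=F_2(D_2)\cdot\1_{p^*=\infty}+p^*\cdot\1_{p^*<\infty}$.

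The strategy is an iterated averaging: fix $D_2$ and sum over $D_1$ with $|D_1|\le x/|D_2|$, then sum over $D_2$. For $|D_2|\le x^{1-\ve}$ (the complementary range contributing $o(1)$ after dividing by $|\cs{D}|\asymp x\log x$), a quantitative equidistribution statement for Kronecker symbols on the finite set $S\defeq\{p\mid D_2:p<F_2\}$ (an effective version of Theorem~\ref{th: LolaMainTheorem}, provable via the large sieve) reduces the inner average to
\[
F_2(D_2)\prod_{p\in S}\tf{p+2}{2(p+1)}+\sum_{p\in S}p\cdot\tf{p}{2(p+1)}\prod_{q\in S,\,q<p}\tf{q+2}{2(q+1)}
\]
up to an additive $o(1)$ uniform in $D_2$. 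The first summand is the contribution of $\{p^*=\infty\}$, the second of $\{p^*<\infty\}$.

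Averaging over $D_2$ (with effective weight $\propto 1/|D_2|$) uses asymptotic independence across primes of $\{\chi_2(p)=-1\}$ and $\{\chi_2(p)=0\}$ (by CRT and the marginals in Theorem~\ref{th: LolaMainTheorem}). Interchanging the order of summation, the second summand evaluates termwise to $\sum_p p\cdot P(p\text{ is bad})=\al$. For the first summand, one expands $\prod_{p\in S}\tf{p+2}{2(p+1)}=\prod_p\bigl(1-\tf{p}{2(p+1)}\1_{p\in S}\bigr)$ to first order in the rare event "$p\in S$", whose overall rate is finite (precisely the series $\be<\infty$). The leading term produces $\mathbb{E}[F_2]=\Theta$ (computed termwise from the marginals $P(\chi_2(p_k)=-1)\prod_{j<k}P(\chi_2(p_j)\ne-1)$), and the first-order correction, after using $D_1\perp D_2$, yields $-\Theta\be$. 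Summing the two pieces gives $\Theta(1-\be)+\al$.

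The main obstacle is justifying the interchange of the infinite sum over $k$ with $\lim_{x\to\infty}$ and controlling the higher-order terms. The geometric decay $\prod_{j<k}\tf{p_j+2}{2(p_j+1)}=O(2^{-k})$ provides an error budget; the contribution of $D_2$ with $F_2(D_2)$ anomalously large, or with two or more bad primes (second order in $\be$), must be bounded by a large-sieve-type estimate on the density of fundamental discriminants satisfying several prescribed congruence conditions. This is the step Linowitz and Thompson identified as missing in their heuristic derivation; once uniform tail control is in place, the termwise evaluation of the three series defining $\Theta$, $\al$, $\be$ is routine.
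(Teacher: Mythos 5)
Your reduction of $\eta$ to $F_2(D_2)=n(D_2)$ and the first ``bad'' prime $p^*$ is sound, and your three-term answer $\Theta-\Theta\be+\al$ corresponds exactly to the decomposition the paper actually uses, namely $\sum\eta=\sum n(D_2)+\sum_{\eta\mid D_2}\bigl(n(D_1)-n(D_2)\bigr)$. The genuine gap is that every analytic input in your argument is asserted rather than proved: the ``quantitative equidistribution statement for Kronecker symbols\dots provable via the large sieve,'' the ``asymptotic independence across primes,'' and above all the ``large-sieve-type estimate'' needed to control discriminants with $F_2(D_2)$ anomalously large or with several bad primes. You acknowledge that this is exactly the step Linowitz and Thompson could not supply --- but supplying it is the entire content of the theorem. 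The paper does not prove any new sieve estimate; instead it rearranges the three sums so that each becomes an average of $n(D)$ (or of $n(D)/(n(D)+1)$, or of $1/(n(D)+1)$) over a single family of fundamental discriminants, at which point Pollack's theorem --- the stated Main Tool, whose parts (i) and (ii) already provide the uniformity in $k$ for $p_k\le(\log x)^{1/3}$ and the tail bound $\sum_{n(D)>(\log x)^{1/3}}n(D)=o(x)$ --- together with partial summation over the hyperbola $|D_1D_2|\le x$ finishes the proof. Your proposal never invokes Pollack's result, so as written it does not advance beyond the original heuristic in rigor.

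A secondary problem is the ``first order in $\be$'' expansion. The quantity $\be$ is a fixed positive constant and the event $|S|\ge 2$ has fixed positive probability, so the terms you discard contribute a nonvanishing amount to each of your two summands separately; you give no argument that these contributions cancel in the total, and ``the geometric decay provides an error budget'' is not such an argument. Likewise, evaluating the first-order correction as $-\Theta\be$ requires the joint distribution of $F_2(D_2)$ and $\1_{p\mid D_2}$, which live on the same discriminant and are correlated (indeed $p\mid D_2$ forces $F_2(D_2)\ne p$); independence of $D_1$ and $D_2$ does not address this. These are precisely the places where one needs either an exact identity, as in the paper, or a quantitative error term, and neither is present in the proposal.
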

The numerical computation was done using Sage. We used RIF for interval arithmetic and we truncated at $k=1000$. 

\section{Main Tools}
First we will need asymptotic estimates for some sets of fundamental discriminants. It is well known (see, for example, \cite{Cohen}) that
\begin{equation}\label{eq: D le x}
\sum_{|D|\le x} 1 \sim \frac{x}{\ze(2)},
\end{equation}
where $D$ runs over all fundamental discriminants with $|D|\le x$. Here $\ze$ is the Riemann zeta function. Now, let $n_1(m)$ be the smallest integer $n\ge 1$ relatively prime to $m$ such that the congruence $x^2\equiv n\mod{m}$ has no solutions. Even though Vinogradov's conjecture remains open, it is possible to show that large values of $n_1(p)$ are rare. More specifically, using the large sieve, Linnik \cite{Linnik} showed that for all $\ve\gt 0$, we have
\[
\#\{p\le x \; : \; n_1(p)\gt x^{\ve}\}\ll_{\ve} 1.
\]
Using similar ideas to the ones from Linnik's paper, Erd\H{o}s \cite{Erdos} obtained a result concerning the average of $n_1(p)$ as $p$ varies over prime numbers less than or equal to $x$:
\begin{equation}\label{eq: Average n2(p)}
\frac{1}{\pi(x)}\sum_{p\le x}n_1(p)\xrightarrow[x\to \infty]{}\sum_{k=1}^{\infty}\frac{p_k}{2^k},
\end{equation}
where $p_k$ is the $k$th prime and $\pi(x)$ is the prime counting function. In a similar fashion, Pollack \cite{Pollack} considered a variation of \eqref{eq: Average n2(p)}. We summarize his result in the following theorem:
\begin{theo}[Pollack, 2012]\label{th: Pollack}
  For each fundamental discriminant $D$, let $\chi_D$ be the associated Dirichlet character, i.e., $\chi_D(m)\defeq \big(\frac{D}{m}\big)$. For each character $\chi$, let $n_{\chi}$ denote the least $n$ for which $\chi(n)\notin \{0,1\}$. Finally, let $n(D)\defeq n_{\chi_D}$. Then
  \begin{enumerate}[{\rm (i)}]
    \item Uniformly in $k$ such that the $k$th prime satisfies $p_k\le (\log{x})^{\f13}$, we have
    \begin{equation*}\label{eq: D le x, n(D)=p_k}
      \# \{|D|\le x \; : \; n(D)=p_k\}=\frac{p_k}{2(p_k+1)}\prod_{j=1}^{k-1}\frac{p_j+2}{2(p_j+1)}\frac{x}{\ze(2)}+ O(x^{\f23}).
    \end{equation*}
    \item \begin{equation*}\label{eq: n(D) gt (log{x})^{1/3}}
            \sum_{\substack{|D|\le x \\ n(D)\gt (\log{x})^{\f13}}}n(D)=o(x).
          \end{equation*}
  \end{enumerate}
  Therefore, using \eqref{eq: D le x}, we have
  \begin{equation}\label{eq: Average of n(D)}
    \frac{\sum_{|D|\le x}n(D)}{\sum_{|D|\le x} 1} \xrightarrow[x\to \infty]{} \Theta,
  \end{equation}
  where
  \begin{equation*}\label{eq: ThetaDefinition}
    \Theta\defeq \sum_{k=1}^{\infty}\frac{p_k^2}{2(p_k+1)}\prod_{j=1}^{k-1}\frac{p_j+2}{2(p_j+1)}\approx 4.9809473396\ldots
  \end{equation*}
\end{theo}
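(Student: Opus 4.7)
My plan is to establish parts (i) and (ii) separately and then combine them via \eqref{eq: D le x} to obtain \eqref{eq: Average of n(D)}.

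For part (i), the starting observation is that for an odd prime $p$ the value $\chi_D(p) = \left(\frac{D}{p}\right)$ depends only on $D \bmod p$ (and for $p=2$ only on $D \bmod 8$). Hence the event $n(D) = p_k$, which means $\chi_D(p_j) \in \{0,1\}$ for $j < k$ and $\chi_D(p_k) = -1$, is determined by the residue class of $D$ modulo $M_k \defeq 8\, p_1 \cdots p_k$. I would express
\[
  \#\{|D|\le x:\ n(D) = p_k\} = \sum_{a \in S_k} \#\{|D|\le x:\ D \equiv a \bmod M_k,\ D \text{ fundamental}\},
\]
where $S_k \subset \mathbb{Z}/M_k\mathbb{Z}$ encodes the admissible patterns, then evaluate each term by standard Möbius inversion, counting squarefree integers in an arithmetic progression with the extra $\bmod 4$ condition that makes $D$ a fundamental discriminant. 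Multiplying local densities at each prime $p_j$ yields the factor $(p_j+2)/(2(p_j+1))$ for $j<k$ (one class giving $\chi_D(p_j)=0$, plus $(p_j-1)/2$ classes giving $\chi_D(p_j)=1$) and $p_k/(2(p_k+1))$ at $p_k$. Each arithmetic progression contributes the usual $O(\sqrt{x})$ error from squarefree counting, and since $p_k \le (\log x)^{1/3}$ forces $M_k \le \exp(O(p_k)) = x^{o(1)}$, summing over the at most $M_k$ classes gives a total error absorbed in $O(x^{2/3})$.

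For part (ii), I would invoke the large sieve. The condition $n(D) > y$ means $\chi_D(p) \in \{0,1\}$ for every prime $p \le y$, so $D$ avoids $(p-1)/2$ of the $p+1$ admissible residue classes modulo each such $p$. A large-sieve inequality applied to these quadratic conditions on $D$ gives a bound of the shape
\[
  \#\{|D|\le x:\ n(D) > y\} \ll x \prod_{p \le y}\left(\tfrac{1}{2} + O(1/p)\right) \ll x\, \log y \cdot 2^{-\pi(y)},
\]
uniformly in a suitable range of $y$. Partial summation then gives
\[
  \sum_{\substack{|D|\le x\\ n(D) > y_0}} n(D) \;\le\; y_0 \cdot \#\{n(D) > y_0\} + \int_{y_0}^{\infty} \#\{n(D) > t\}\, dt,
\]
and with $y_0 = (\log x)^{1/3}$ both terms are $o(x)$, since $2^{-\pi(t)}$ decays super-polynomially in $t$.

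To conclude, I would split $\sum_{|D| \le x} n(D)$ according to whether $n(D) \le (\log x)^{1/3}$. On the small-$n(D)$ side I would apply part (i) termwise to obtain
\[
  \frac{x}{\zeta(2)} \sum_{p_k \le (\log x)^{1/3}} \frac{p_k^2}{2(p_k+1)} \prod_{j=1}^{k-1}\frac{p_j+2}{2(p_j+1)} + O\bigl(x^{2/3} \pi((\log x)^{1/3})\bigr),
\]
and on the large-$n(D)$ side I would apply part (ii). Dividing by $\sum_{|D|\le x} 1 \sim x/\zeta(2)$ and letting $x\to\infty$ (so that the truncation $y_0$ grows) recovers the full series $\Theta$. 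The main obstacle is the large-sieve step in part (ii): one needs a bound that is simultaneously strong (exponential decay in $\pi(y)$) and uniform in $y$ over a long enough range so that the tail $\int_{y_0}^{\infty}\#\{n(D)>t\}\,dt$ is genuinely $o(x)$ rather than merely a small power of $x$; without this, rare fundamental discriminants with atypically large $n(D)$ could in principle inflate the average and prevent convergence to $\Theta$.
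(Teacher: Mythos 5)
This theorem is not proved in the paper at all: parts (i) and (ii) are imported verbatim from Pollack's 2012 paper (the text says ``we summarize his result''), and the only step the paper actually performs is the final ``therefore,'' i.e.\ splitting $\sum_{|D|\le x} n(D)$ at $n(D)\le(\log x)^{1/3}$, applying (i) termwise, discarding the tail by (ii), and dividing by \eqref{eq: D le x}. Your concluding paragraph reproduces exactly that combination step, correctly, including the observation that the truncation point grows with $x$ so the partial sums converge to the full series $\Theta$; so for the part the paper is responsible for, you match it. Where you go beyond the paper is in sketching proofs of (i) and (ii) themselves, and there your outline is essentially the route Pollack takes (local conditions modulo $8p_1\cdots p_k$ with squarefree counting in progressions for (i); the large sieve for the rare $D$ with large $n(D)$ in (ii)).

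Two points deserve flagging. First, in (i) your parenthetical class count --- one class giving $\chi_D(p_j)=0$ plus $(p_j-1)/2$ classes giving $\chi_D(p_j)=1$ out of the $p_j$ classes --- would yield a density $(p_j+1)/(2p_j)$, not the stated $(p_j+2)/(2(p_j+1))$. The correct factor arises because fundamental discriminants are not equidistributed among residue classes mod $p_j$: the squarefree condition depresses the class $p_j\mid D$ to density $1/(p_j+1)$ (as in the paper's Lemma 2.1), giving $\frac{1}{p_j+1}+\frac{p_j}{2(p_j+1)}=\frac{p_j+2}{2(p_j+1)}$. You clearly land on the right constant, but the justification as written does not produce it. Second, your own caveat about (ii) is the genuine gap: the bound $\#\{|D|\le x:\ n(D)>y\}\ll x\log y\cdot 2^{-\pi(y)}$ cannot hold uniformly for all $y$, and the integral $\int_{y_0}^{\infty}\#\{n(D)>t\}\,dt$ must in practice be cut at the maximal value of $n(D)$ and handled in ranges --- a Brun/fundamental-lemma sieve for $t$ up to a power of $\log x$, and a Linnik-type large-sieve statement ($\#\{|D|\le x:\ n(D)>x^{\varepsilon}\}\ll_{\varepsilon}1$) beyond that. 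This is precisely the content of Pollack's proof of (ii), and without it your part (ii) remains an unproved assertion. Since the paper treats the whole theorem as a citation, this does not undermine the paper, but it means your proposal is an outline of Pollack's argument rather than a self-contained proof.
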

We will also need the following lemma from Linowitz and Thompson \cite{Lola}:
\begin{lem}\label{lem: ProportionDiscriminants}
  Let $\bb{P}(\ve, p)$ denote the proportion of fundamental discriminants $D$ with $\big( \tf{D}{p}\big)=\ve$. Then, we have
  \[
  \bb{P}(\ve, p)=\begin{cases}
                   \mfrac{p}{2p+2}, & \mbox{if } \ve\in\{\pm 1\} \\
                   \mfrac{1}{p+1}, & \mbox{if } \ve=0.
                 \end{cases}
  \]
\end{lem}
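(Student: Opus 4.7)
The plan is to reduce to two elementary inputs: (1) the density of fundamental discriminants in arithmetic progressions modulo $p$, and (2) the characterization of when the Kronecker symbol $\big(\tf{D}{p}\big)$ vanishes. Recall that a fundamental discriminant $D$ is either a squarefree integer $\equiv 1\mod{4}$, or of the form $4m$ with $m$ squarefree and $m\equiv 2, 3\mod{4}$. For odd $p$ the Kronecker symbol agrees with the Legendre symbol when $p\nmid D$ and vanishes precisely when $p\mid D$; for $p=2$ it vanishes iff $D$ is even and equals $\pm 1$ according as $D\equiv 1, 5\mod{8}$.

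I would first compute $\bb{P}(0,p)$. For odd $p$, the event $p\mid D$ is the event that $p$ divides the squarefree part of $D$, and a standard sieve computation (replacing the local Euler factor $1-1/p^2$ at $p$ by $\tf{1}{p}(1-1/p)$ in the Dirichlet series of squarefree integers) yields
\[
\sum_{\substack{|D|\le x\\ p\mid D}} 1 \sim \frac{x}{(p+1)\ze(2)},
\]
which together with \eqref{eq: D le x} gives $\bb{P}(0,p)=1/(p+1)$. The case $p=2$ follows from a direct partition of fundamental discriminants into odd ones ($D\equiv 1\mod{4}$, squarefree) and even ones ($D=4m$ with $m$ squarefree and $m\equiv 2,3\mod{4}$), yielding $\bb{P}(0,2)=1/3$.

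For $\ve=\pm 1$ the argument proceeds by equidistribution of fundamental discriminants coprime to $p$ among the invertible residue classes modulo $p$ (or modulo $8$ when $p=2$). This follows from the non-vanishing of $L(1,\chi)$ for non-principal Dirichlet characters $\chi\mod{p}$ applied to the identity
\[
\sum_{\substack{n\ge 1\\ n\text{ squarefree}}}\frac{\chi(n)}{n^s}=\frac{L(s,\chi)}{L(2s,\chi^2)},
\]
combined with a careful accounting of the mod-$4$ condition singling out fundamental discriminants among general squarefree integers. Since exactly half of the $p-1$ invertible classes modulo $p$ are quadratic residues, the coprime discriminants split evenly between $\ve=+1$ and $\ve=-1$, giving
\[
\bb{P}(+1,p)=\bb{P}(-1,p)=\tf{1}{2}\bigl(1-\bb{P}(0,p)\bigr)=\frac{p}{2(p+1)}.
\]
The main technical step is the equidistribution above; once handled via twisting by the appropriate characters modulo $\text{lcm}(p,4)$, the remaining arithmetic is routine.
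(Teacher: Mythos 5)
The paper offers no proof of this lemma at all: it is imported verbatim from Linowitz--Thompson \cite{Lola} (their Lemma 4.2), so there is no internal argument to compare against. Your proof is essentially the standard density computation and is correct. The value $\bb{P}(0,p)=\tfrac{1}{p+1}$ for odd $p$ does follow from replacing the local factor $1-p^{-2}$ by $\tfrac1p(1-\tfrac1p)$, since $\tfrac{(p-1)/p^2}{(p^2-1)/p^2}=\tfrac{1}{p+1}$; the case $p=2$ via the partition into $D\equiv 1\pmod 4$ squarefree and $D=4m$ with $m\equiv 2,3\pmod 4$ squarefree gives the even discriminants density $\tfrac13$, consistent with \eqref{eq: D le x}; and the even split of the remaining mass between $\ve=\pm1$ follows from equidistribution of fundamental discriminants prime to $p$ among the invertible classes modulo $p$ (modulo $8$ when $p=2$), together with the fact that exactly half of those classes are residues. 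One imprecision worth fixing: the analytic input for the equidistribution step is not the \emph{non-vanishing} of $L(1,\chi)$ but merely the \emph{holomorphy} of $L(s,\chi)$ at $s=1$ for non-principal $\chi$, so that $L(s,\chi)/L(2s,\chi^2)$ has no pole at $s=1$ and hence $\sum_{n\le x}\mu^2(n)\chi(n)=o(x)$ (the denominator is harmless since $L(2,\chi^2)\neq 0$ in the region of absolute convergence). In fact no $L$-functions are needed at all: writing $\mu^2(n)=\sum_{d^2\mid n}\mu(d)$ yields $\sum_{n\le x}\mu^2(n)\chi(n)=\sum_{d\le\sqrt{x}}\mu(d)\chi(d)^2\sum_{m\le x/d^2}\chi(m)=O(p\sqrt{x})$ by periodicity and orthogonality, which more than suffices and keeps the whole lemma elementary.
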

\section{Proof of Theorem \ref{th: Main Theorem}}

Let $\chi_1, \chi_2$ be Dirichlet characters associated with the fundamental discriminants $D_1$ and $D_2$. For a prime $p$,
\[
 \sigma_{\chi_1,\chi_2}^{k-1}(p)=\sum_{d|p} \chi_1\big(\f{p}{d}\big)\chi_2(d)d^{k-1}=\chi_1(p)+\chi_2(p)p^{k-1},
\]
so that
\begin{equation}\label{eq: sgn(sigma)}
  \sgn{\sigma_{\chi_1,\chi_2}^{k-1}(p)}=\begin{cases}
                                        \chi_1(p), & \mbox{if } p|D_2 \\
                                        \chi_2(p), & \mbox{otherwise}.
                                      \end{cases}
\end{equation}

\begin{proof}[Proof of Theorem \ref{th: Main Theorem}]
  By \eqref{eq: D le x},
  \[
  \sum_{|D_1D_2|\le x} 1=\sum_{|D_1|\le x}\sum_{D_2\le \frac{x}{|D_1|}}1\sim \f{x}{\ze(2)}\sum_{|D_1|\le x}\frac{1}{ |D_1|}\cdot
  \]
  Let $A(x)\defeq \sum_{|D|\le x} 1$ and $f(x)\defeq \mf{1}{x}$. Since $A(x)\sim \mf{x}{\ze(2)}$, then by partial summation
  \begin{align}
 \nonumber \sum_{|D_1|\le x}\frac{1}{|D_1|}={}&A(x)f(x)-A(1)f(1)-\int_{1}^{x}A(t)f'(t)\dt \\
  \nonumber \sim{}& \frac{1}{\ze(2)}-1+\int_{1}^{x}\frac{\dt}{\ze(2)t} \\
  \sim{}& \frac{\log{x}}{\ze(2)}\cdot \label{eq: D le x  1/D}
  \end{align}
  Hence
  \begin{equation}\label{eq: denominator}
  \sum_{|D_1D_2|\le x} 1\sim \frac{x\log{x}}{\ze(2)^2}\cdot
  \end{equation}
  Now let us estimate the numerator. For the sake of simplicity, let $\eta\defeq \eta(D_1,D_2)$. Then,
  \[
  \sum_{|D_1D_2|\le x}\eta=\sum_{\substack{|D_1D_2|\le x \\ \eta|D_2}}\eta+\sum_{\substack{|D_1D_2|\le x \\ \eta\nmid D_2}}\eta.
  \]
  If $\eta | D_2$, then by \eqref{eq: sgn(sigma)}, $\eta$ is the smallest prime $p$ such that $\chi_1(p)\notin \{0,1\}$, and with the notation of Theorem \ref{th: Pollack}, this means that $\eta=n(D_1)$. Similarly, if $\eta\nmid D_2$, then $\eta=n(D_2)$. Therefore,
  \[
  \sum_{|D_1D_2|\le x}\eta=\sum_{\substack{|D_1D_2|\le x \\ \eta|D_2}}n(D_1)+\sum_{\substack{|D_1D_2|\le x \\ \eta\nmid D_2}}n(D_2).
  \]
  Now,
  \[
  \sum_{\substack{|D_1D_2|\le x \\ \eta\nmid D_2}}n(D_2)=\sum_{|D_1D_2|\le x }n(D_2)-\sum_{\substack{|D_1D_2|\le x \\ \eta| D_2}}n(D_2),
  \]
  so that
  \begin{equation}\label{eq: Numerator}
    \sum_{|D_1D_2|\le x}\eta=\sum_{|D_1D_2|\le x }n(D_2)+\sum_{\substack{|D_1D_2|\le x \\ \eta|D_2}}n(D_1)-\sum_{\substack{|D_1D_2|\le x \\ \eta|D_2}}n(D_2).
  \end{equation}
  By \eqref{eq: Average of n(D)}, we have
  \begin{align}
  \nonumber \sum_{|D_1D_2|\le x }n(D_2)=&{}\sum_{|D_1|\le x}\sum_{|D_2|\le \frac{x}{|D_1|}}n(D_2) \\
  \nonumber \sim&{}\Theta \frac{ x}{\ze(2)}\sum_{|D_1|\le x}\frac{1}{|D_1|} \\
  \sim &{} \Theta\frac{x\log{x}}{\ze(2)^2}, \label{eq: S1}
  \end{align}
  where the final estimate follows from \eqref{eq: D le x  1/D}. Now, by Lemma \ref{lem: ProportionDiscriminants}, the proportion of fundamental discriminants such that $p | D$ is $\mf{1}{p+1}\cdot$ Hence,
  \begin{align*}
  \sum_{\substack{|D_1D_2|\le x \\ \eta|D_2}}n(D_1)={}&\sum_{|D_1|\le x}n(D_1)\sum_{\substack{|D_2|\le\frac{x}{|D_1|} \ \\ n(D_1)|D_2}}1 \\ ={}&\sum_{|D_1|\le x}\f{n(D_1)}{n(D_1)+1}\sum_{|D_2|\le\frac{x}{|D_1|}}1 \\ \sim{}&\frac{x}{\ze(2)}\sum_{|D_1|\le x}\frac{n(D_1)}{|D_1|(n(D_1)+1)}\cdot
  \end{align*}
  To find an asymptotic for the last sum we again use partial summation. Let
  \[
  B(x)\defeq \sum_{|D_1|\le x}\frac{n(D_1)}{n(D_1)+1}\cdot
  \]
  Then, by (i) of Theorem \ref{th: Pollack},
  \begin{align*}
    \sum_{\substack{|D_1|\le x\\ n(D_1)\le (\log{x})^{\f13}}}\frac{n(D_1)}{n(D_1)+1}={} & \sum_{\substack{k=1 \\p_k\le (\log{x})^{\f13}}}^{\infty}\frac{p_k}{p_k+1}\# \{|D_1|\le x \; : \; n(D_1)=p_k\} \\
  \sim{} & \al\frac{x}{\ze(2)}
  \end{align*}
  where
  \[
  \al= \sum_{k=1}^{\infty}\frac{p_k^2}{2(p_k+1)^2}\prod_{j=1}^{k-1}\frac{p_j+2}{2(p_j+1)}\cdot
\]
  Now, by (ii) of Theorem \ref{th: Pollack},
  \[
  \sum_{\substack{|D_1|\le x\\ n(D_1)\gt (\log{x})^{\f13}}}\frac{n(D_1)}{n(D_1)+1}\le \sum_{\substack{|D_1|\le x\\ n(D_1)\gt (\log{x})^{\f13}}}n(D_1)=o(x).
  \]
  Hence,
  \begin{equation*}\label{eq: B(x)}
   B(x)= \sum_{\substack{|D_1|\le x\\ n(D_1)\le (\log{x})^{\f13}}}\frac{n(D_1)}{n(D_1)+1} + \sum_{\substack{|D_1|\le x\\ n(D_1)\gt (\log{x})^{\f13}}}\frac{n(D_1)}{n(D_1)+1}
    \sim\al\frac{x}{\ze(2)}\cdot
  \end{equation*}

Therefore,
\[
\sum_{|D_1|\le x}\frac{n(D_1)}{|D_1|(n(D_1)+1)}=B(x)f(x)-B(1)f(1)-\int_{1}^{x}B(t)f'(t)\dt\sim \al\frac{\log{x}}{\ze(2)},
\]
so that
\begin{equation}\label{eq: S2}
  \sum_{\substack{|D_1D_2|\le x \\ \eta|D_2}}n(D_1)\sim \al\frac{x\log{x}}{\ze(2)^2}\cdot
\end{equation}
Finally,
\[
\sum_{\substack{|D_1D_2|\le x \\ \eta|D_2}}n(D_2)=\sum_{|D_2|\le x}n(D_2) \sum_{|D_1|\le\f{x}{|D_2|}}\frac{1}{n(D_1)+1}\cdot
\]
To get an estimate for the inner sum, let
  \[
  C(x)\defeq \sum_{|D_1|\le \f{x}{|D_2|}}\frac{1}{n(D_1)+1}\cdot
  \]
  Then, by (i) of Theorem \ref{th: Pollack},
  \begin{align*}
    \sum_{\substack{|D_1|\le \f{x}{|D_2|}\\ n(D_1)\le (\log{x})^{\f13}}}\frac{1}{n(D_1)+1}={} & \sum_{\substack{k=1 \\p_k\le (\log{x})^{\f13}}}^{\infty}\frac{1}{p_k+1}\# \{|D_1|\le \f{x}{|D_2|} \; : \; n(D_1)=p_k\} \\
  \sim{} & \be\frac{x}{\ze(2)|D_2|}
  \end{align*}
  where
  \[
  \be= \sum_{k=1}^{\infty}\frac{p_k}{2(p_k+1)^2}\prod_{j=1}^{k-1}\frac{p_j+2}{2(p_j+1)}\cdot
\]
 On the other hand,
  \begin{align*}
    \sum_{\substack{|D_1|\le \f{x}{|D_2|}\\ n(D_1)\gt (\log{x})^{\f13}}}\frac{1}{n(D_1)+1}\le{} & \sum_{|D_1|\le \frac{x}{|D_2|}}\frac{1}{(\log{x})^{\f13}+1} \\
    \sim{} & \frac{x}{|D_2|\ze(2)((\log{x})^{\f13}+1)} \\
    \le{} & \frac{x}{(\log{x})^{\f13}+1} \\
    ={} & o(x).
  \end{align*}
  Hence,
  \begin{equation*}\label{eq: B(x)}
   C(x)= \sum_{\substack{|D_1|\le \f{x}{|D_2|}\\ n(D_1)\le (\log{x})^{\f13}}}\frac{1}{n(D_1)+1} + \sum_{\substack{|D_1|\le \f{x}{|D_2|}\\ n(D_1)\gt (\log{x})^{\f13}}}\frac{1}{n(D_1)+1}
    \sim\be\frac{x}{\ze(2)|D_2|}\cdot
  \end{equation*}
From this we see that
\begin{equation}\label{eq: S_3}
 \sum_{\substack{|D_1D_2|\le x \\ \eta|D_2}}n(D_2)\sim \sum_{|D_2|\le x}\be\frac{n(D_2)x}{\ze(2)|D_2|}\sim \Theta\be\frac{x\log{x}}{\ze(2)^2},
\end{equation}
where the last estimate follows from partial summation and applying Theorem \ref{th: Pollack}. Therefore, plugging \eqref{eq: S1}, \eqref{eq: S2} and \eqref{eq: S_3} into \eqref{eq: Numerator} shows that
\[
 \sum_{|D_1D_2|\le x}\eta\sim (\Theta+\al-\Theta\be)\frac{x\log{x}}{\ze(2)^2}\cdot
\]
This together with \eqref{eq: denominator} completes the proof.
\end{proof}
\begin{remark}
We can give the following explanation of why Linowitz and Thompson's Conjecture \ref{con: ConLola} was slightly off from the correct number: the result from Theorem \ref{th: LolaMainTheorem} is not uniform in $k$ for the choice of the $p_k$ (we fix a set of primes beforehand), while the result from Theorem \ref{th: Pollack} is uniform in $k$ satisfying $p_k\le (\log{x})^{\f13}$. In order to make Linowitz and Thompson's heuristic argument rigorous we would first need to show that Theorem \ref{th: LolaMainTheorem} holds uniformly in $k$ such that $p_k\le f(x)$ for some function $f$ with $f(x) \xrightarrow[x\to \infty]{} \infty$. Then,
\begin{align*}
  \frac{\sum_{|D_1D_2|\le x}\eta(D_1,D_2)}{\sum_{|D_1D_2|\le x} 1} =\sum_{p_k\le f(x)}{}&p_k\, \mbox{Prob}(\eta(D_1,D_2)=p_k) \\  {}& + \sum_{p_k\gt f(x)}p_k\, \mbox{Prob}(\eta(D_1,D_2)=p_k) \\
  \xrightarrow[x\to \infty]{}{}&\theta + \mu,
\end{align*}
where $\theta$ is the conjectured constant \eqref{eq: theta} and
\[
\mu=\lim_{x\to\infty} \sum_{p_k\gt f(x)}p_k\, \mbox{Prob}(\eta(D_1,D_2)=p_k).
\]
Linowitz and Thompson conjectured that $\mu=0$, but according to Theorem \ref{th: Main Theorem}, $\mu$ does make a small contribution.
\end{remark}
\textbf{Acknowledgments} I would like to thank my PhD supervisor Lola Thompson for guiding me throughout this work and taking the time to give me suggestions about the paper. I would also like to thank the anonymous referee for reading the paper carefully and offering helpful remarks.

\printbibliography

\end{document}